\newtheorem{proposition}{Proposition}[section]
\newtheorem{corollary}[proposition]{Corollary}
\newtheorem{theorem}[proposition]{Theorem}
\theoremstyle{definition}
\newtheorem{definition}[proposition]{Definition}
\newtheorem{example}[proposition]{Example}
\newtheorem{remark}[proposition]{Remark}
\newtheorem{remarks}[proposition]{Remarks}
\newcommand{\thlabel}[1]{\label{th:#1}}
\newcommand{\thref}[1]{Theorem~\ref{th:#1}}
\newcommand{\selabel}[1]{\label{se:#1}}
\newcommand{\colabel}[1]{\label{co:#1}}
\newcommand{\coref}[1]{Corollary~\ref{co:#1}}
\newcommand{\relabel}[1]{\label{re:#1}}
\newcommand{\reref}[1]{Remark~\ref{re:#1}}
\newcommand{\exlabel}[1]{\label{ex:#1}}
\newcommand{\exref}[1]{Example~\ref{ex:#1}}
\newcommand{\delabel}[1]{\label{de:#1}}
\newcommand{\eqlabel}[1]{\label{eq:#1}}
\newcommand{\equref}[1]{(\ref{eq:#1})}
\newcommand{\Cc}{\mathcal{C}}
\def\*C{{}^*\hspace*{-1pt}{\Cc}}
\def\text#1{{\rm {\rm #1}}}
\begin{document}
\title[Metabelian associative algebras]
{Metabelian associative algebras}

\author{G. Militaru}
\address{Faculty of Mathematics and Computer Science, University of Bucharest, Str.
Academiei 14, RO-010014 Bucharest 1, Romania}
\email{gigel.militaru@fmi.unibuc.ro and gigel.militaru@gmail.com}

\thanks{This work was supported by a grant of the Romanian National
Authority for Scientific Research, CNCS-UEFISCDI, grant no.
88/05.10.2011.}

\subjclass[2010]{15A21, 16D70, 16Z05} \keywords{Metabelian
algebras, congruence of bilinear forms.}

%\maketitle

\begin{abstract}
Metabelian algebras are introduced and it is shown that an algebra
$A$ is metabelian if and only if $A$ is a nilpotent algebra having
the index of nilpotency at most $3$, i.e. $x y z t = 0$, for all
$x$, $y$, $z$, $t \in A$. We prove that the It\^{o}'s theorem for
groups remains valid for associative algebras. A structure theorem
for metabelian algebras is given in terms of pure linear algebra
tools and their classification from the view point of the
extension problem is proven. Two border-line cases are worked out
in detail: all metabelian algebras having the derived algebra of
dimension $1$ (resp. codimension $1$) are explicitly described and
classified. The algebras of the first family are parameterized by
bilinear forms and classified by their homothetic relation. The
algebras of the second family are parameterized by the set of all
matrices $(X, Y, u) \in {\rm M}_{n}(k)^2 \times k^n$ satisfying
$X^2 = Y^2 = 0$, $XY = YX$ and $Xu = Yu$.
\end{abstract}

\maketitle

\section*{Introduction}
The concept of a metabelian group goes back to W.B. Fite
\cite{fite} and since then they became a very important topic of
study within group theory \cite{lennox}. At the level of Lie, or
more general Leibniz algebras, the corresponding concept of
metabelian Lie/Leibniz algebra, as $2$-step solvable algebra, is
also well known \cite{am-2014, dani, Dre}. In this paper we
introduce the associative algebra counterpart of a metabelian
group by defining a \emph{metabelian algebra} over a field $k$ as
an extension of an abelian algebra by an abelian algebra - the
word 'abelian' is borrowed from Lie algebras, i.e. an algebra $A$
having the trivial multiplication: $x y = 0$, for all $x$, $y\in
A$. At first sight, such a restrictive definition seems to have
limited chances of leading to an interesting theory of these
associative algebras and, moreover, few of them can be expected to
exist. We shall prove the contrary and, to begin with, we shall
introduce the following argument: the classification of all
metabelian associative algebras having the derived algebra of
dimension $1$ (hence only a small and apparently unattractive
class of algebras) is equivalent to the classification of bilinear
forms on a vector space up to the homothetic relation on bilinear
forms - a relation that generalizes the classical isometric
relation on bilinear forms \cite{will}. On the other hand, the
class of all $(n+1)$-dimensional metabelian associative algebras
having the derived algebra of dimension $n$ is parameterized by an
interesting set of matrices: namely, the matrices $(X, Y, u) \in
{\rm M}_{n}(k)^2 \times k^n$ satisfying $X^2 = Y^2 = 0$, $XY = YX$
and $Xu = Yu$. We prove that an algebra $A$ is metabelian if and
only if $A$ is a nilpotent algebra having the index of nilpotency
at most $3$. The classification of all nilpotent algebras of a
given dimension is a classical problem in the theory of
associative algebras: see \cite[Chapter VI]{KP} where the
classification of nilpotent algebras of dimension $3$ and $4$ is
obtained. The classification of nilpotent algebras of a dimension
higher than $4$ is a difficult problem since the complexity of the
computations increases very rapidly with the dimension
\cite{graaf}. Thus, one has higher chances to succeed in
classifying metabelian algebras of a given dimension instead of
nilpotent ones. The first bridge to solving the problem is given
by \thref{carmatab}, where a structure theorem is proposed. We
show that any metabelian algebra $A$ is isomorphic to an algebra
of the form $P \star V$ associated to a system $(P, \, V, \,
\triangleleft, \, \triangleright, \, \theta)$ consisting of two
vector spaces $P$, $V$ and three bilinear maps $\triangleleft: V
\times P \to V$, $\triangleright: P \times V \to V$, $\theta: P
\times P \to V$ satisfying the following compatibility conditions
for any $p$, $q$, $r\in P$ and $x\in V$:
$$
p \triangleright (x \triangleleft q) = (p \triangleright x)
\triangleleft q, \quad (x \triangleleft p) \triangleleft q = p
\triangleright (q \triangleright x) = 0, \quad p \triangleright \theta (q, \, r) = \theta (p, \, q) \triangleleft
r
$$
We denoted $P \star V := P \times V$ with the multiplication given
for any $p$, $q\in P$, $x$, $y \in V$ by
$$
(p, x) \star (q, y) := (0, \, \theta (p, q) + p \triangleright y +
x \triangleleft q)
$$
Based on this, the classification of all metabelian algebras that
are extensions of a given abelian algebra $P_0$ by an abelian
algebra $V_0$ is obtained in \thref{clascoho} where the explicit
description of the classifying object ${\rm Ext} (P_0, V_0)$ is
given. ${\rm Ext} (P_0, V_0)$ classifies all metabelian algebras
from the viewpoint of the extension problem \cite{Hoch2}, i.e. up
to an isomorphism of algebras that stabilizes $V_0$ and
co-stabilizes $P_0$. In \thref{codim1totul} all metabelian
algebras having the derived algebra of dimension $1$ are
explicitly described, classified and the automorphism groups of
these algebras are determined. The algebras of this family are
classified by bilinear forms on a vector space $P$ up to the
following equivalence relation: two bilinear forms $\theta$ and
$\theta' \in {\rm Bil} \, (P \times P, \, k)$ are called
homothetic \cite{murr} if there exists a pair $(u, \psi) \in k^*
\times {\rm Aut}_k (P)$ such that $u \, \theta (p, \, q) = \theta'
\bigl( \psi(p), \, \psi (q) \bigl)$, for all $p$, $q\in P$. If $k
= k^2$ this relation is equivalent to the classical classification
of bilinear forms \cite{will}. Examples are given in \coref{n23}.
\thref{codim1totulb} and \exref{excodim1} addresses the dual case:
all $(n+1)$-dimensional metabelian algebras having the derived
algebras of codimension $1$ are explicitly described and
classified by the set of all matrices $(X, Y, u) \in {\rm
M}_{n}(k)^2 \times k^n$ satisfying $X^2 = Y^2 = 0$, $XY = YX$ and
$Xu = Yu$.

\section{Preliminaries}\selabel{prel}
All vector spaces, algebras, linear or bilinear maps are over an
arbitrary field $k$. For a vector space $V$ we denote by ${\rm
Bil} \, (V \times V, \, k)$ all bilinear forms on $V$. Two
bilinear forms $\theta$ and $\theta' \in {\rm Bil} \, (V \times V,
\, k)$ are called \emph{isometric}, and we denote this by $\theta
\approx \theta'$, if there exists an automorphism $\varphi \in
{\rm Aut}_k (V)$ such that $\theta (x, y) = \theta'( \varphi (x),
\, \varphi(y))$, for all $x$, $y\in V$. If $V$ is finite
dimensional having $\{e_1, \cdots, e_n\}$ as a basis we also
denote by $\theta = (\theta (e_i, \, e_j)) \in {\rm M}_n(k)$ the
matrix associated to a bilinear form $\theta \in {\rm Bil} \, (V
\times V, \, k)$. Then $\theta \approx \theta'$ if and only if
there exists an invertible matrix $C \in \mathfrak{gl} (n, k)$,
such that $\theta = C^T \, \theta' \, C$ - where $C^T$ is the
transposed of $C$. For future references to the classification
problem of bilinear forms up to an isometry see \cite{horn, rie}
and the references therein. Two bilinear forms $\theta$ and
$\theta' \in {\rm Bil} \, (V \times V, \, k)$ are called
\emph{homothetic} \cite{murr}, and we denote this by $\theta
\equiv \theta'$, if there exists a pair $(u, \psi) \in k^* \times
{\rm Aut}_k (P)$ such that $u \, \theta (p, \, q) = \theta' \bigl(
\psi(p), \, \psi (q) \bigl)$, for all $p$, $q\in P$. Any isometric
bilinear forms are homothetic and if $k = k^2 := \{x^2 \, | \,
x\in k \}$ then any homothetic bilinear forms are isometric.

By an algebra we always mean an associative algebra, i.e. a pair
$(A, M_A)$ consisting of a vector space $A$ and a bilinear map
$M_A: A \times A \to A$, called the multiplication on $A$ and
denoted by $M_A \, (x, y) = xy$, such that $x (y z) = (x y) z$,
for all $x$, $y$, $z\in A$. Any vector space $V$ is an algebra
with the trivial multiplication $x y = 0$, for all $x$, $y\in V$
-- such an algebra is called \emph{abelian} and will be denoted by
$V_0$. ${\rm Aut}_{\rm Alg} \, (A)$ will denote the group of
algebra automorphisms of $A$. An algebra $A$ is called
\emph{nilpotent} if there exists a positive integer $n$ such that
$x_1 x_2 \cdots x_n = 0$, for all $x_1, \cdots, x_n \in A$. If
there is a non-zero product of $n-1$ elements of $A$, then $n$ is
called the \emph{index of nilpotency} of $A$. For an algebra $A$
we denote by $A'$ the \emph{derived subalgebra} of $A$, i.e. the
subspace of $A$ generated by all $xy$, for any $x$, $y \in A$. Let
$P$ and $V$ be two given algebras. An extension of $P$ by $V$ is
triple $(A, \, i, \, \pi)$ consisting of an algebra $A$ and two
morphisms of algebras $i: V \to A$, $\pi: A \to P$ such that
\begin{eqnarray*}
\xymatrix{ 0 \ar[r] & V \ar[r]^{i} & {A} \ar[r]^{\pi} & P \ar[r] &
0 }
\end{eqnarray*}
is an exact sequence. Two extensions $(A, i, \pi)$ and $(A', i',
\pi')$ of $P$ by $V$ are called \emph{equivalent} and we denote
this by $(A, i, \pi) \approx (A', i', \pi)$ if there exists a
morphism of algebras $\varphi: A \to A'$ that stabilizes $V$ and
co-stabilizes $P$, i.e. the following diagram
\begin{eqnarray*}
\xymatrix {& V \ar[r]^{i} \ar[d]_{Id} & {A}
\ar[r]^{\pi} \ar[d]^{\varphi} & P \ar[d]^{Id}\\
& V \ar[r]^{i'} & {A'}\ar[r]^{\pi'} & P}
\end{eqnarray*}
is commutative. Any such morphism $\varphi$ is an isomorphism and
thus $\approx$ is an equivalence relation on the class of all
extensions of $P$ by $V$. We denote by ${\rm Ext} \, (P, \, V)$
the set of all equivalence classes of all extensions of $P$ by $V$
via $\approx$.

\section{Metabelian algebras}\selabel{metab}
We introduce the concept of metabelian associative algebras having
in mind one of the equivalent definitions of metabelian groups.

\begin{definition} \delabel{metabdef}
An algebra $A$ is called \emph{metabelian} if it is an extension
of an abelian algebra by an abelian algebra, that is there exist
two vector spaces $V$, $P$ and an exact sequence of morphisms of
algebras\footnote{We recall that for a vector space $V$, we denote
$V_0 = V$ with the abelian algebra structure.}
\begin{eqnarray} \eqlabel{aldoileasir}
\xymatrix{ 0 \ar[r] & V_0 \ar[r]^{i} & {A} \ar[r]^{\pi} & P_0
\ar[r] & 0 }
\end{eqnarray}
\end{definition}

Explicit examples will be provided at the end of the paper. In
order to proceed with the characterization and structure theorem
for metabelian algebras some preparatory work is needed.

\begin{definition} \delabel{discreterep}
Let $V$ be a vector space. A \emph{discrete bimodule} over $V$ is
a bimodule over the abelian algebra $V_0$, i.e. a triple $(P,
\triangleleft, \triangleright)$ consisting of a vector space $P$
and two bilinear maps $\triangleleft: V \times P \to V$,
$\triangleright: P \times V \to V$ satisfying the following
compatibility conditions for any $p$, $q\in P$ and $x\in V$:
\begin{equation} \eqlabel{disc1}
(x \triangleleft p) \triangleleft q = p \triangleright (q \triangleright x) = 0,
\qquad p \triangleright (x \triangleleft q) = (p \triangleright x) \triangleleft q
\end{equation}
For a discrete bimodule $(P, \triangleleft, \triangleright)$ over
$V$ a bilinear map $\theta : P \times P \to V$ is called a
\emph{discrete $(\triangleleft, \triangleright)$-cocycle} if for
any $p$, $q$ and $r\in P$ we have:
\begin{equation}\eqlabel{discoc}
p \triangleright \theta (q, \, r) = \theta (p, \, q) \triangleleft
r
\end{equation}
A system $(P, \, V, \, \triangleleft, \, \triangleright, \,
\theta)$ consisting of a vector space $V$, a discrete bimodule
$(P, \triangleleft, \triangleright)$ over $V$ and a discrete
$(\triangleleft, \triangleright)$-cocycle $\theta : P \times P \to
V$ is called a \emph{metabelian datum} of $P$ by $V$. We denote
by ${\rm DZ}^2 \, \bigl((P, \triangleleft, \triangleright), \,
V \bigl)$ the space of all discrete $(\triangleleft,
\triangleright)$-cocycles and by ${\rm Met} \, (P, \, V)$ the set
of all metabelian datums $(P, \, V, \, \triangleleft, \, \triangleright, \, \theta)$
of $P$ by $V$.
\end{definition}

Let $(P, \, V, \, \triangleleft, \, \triangleright, \, \theta) \in
{\rm Met} \, (P, \, V)$ and
$P \star V = P \star_{(\triangleleft, \, \triangleright,
\, \theta)} \, V $ be the vector space $P \times V$ with the
multiplication given for any $p$, $q\in P$, $x$, $y \in V$ by:
\begin{equation} \eqlabel{hoproduct2}
(p, x) \star (q, y) := (0, \, \theta (p, q) + p \triangleright y +
x \triangleleft q)
\end{equation}
As a special case of \cite[Proposition 1.2]{am-2013d} we can easily see that $P \star
V$ with the multiplication given by \equref{hoproduct2} is an
associative algebra. In fact, we can easily show that the multiplication given by \equref{hoproduct2}
is associative if and only if $(P, \, V, \, \triangleleft, \,
\triangleright, \, \theta)$ is a metabelian datum.
$P \star V$ is a metabelian algebra since we have the following exact sequence
of algebra maps
\begin{eqnarray} \eqlabel{extenho1}
\xymatrix{ 0 \ar[r] & V_0 \ar[r]^{i_{V}} & P \star \, V
\ar[r]^{\pi_{P}} & P_0 \ar[r] & 0 }
\end{eqnarray}
where $i_V (x) = (0, x)$ and $\pi_p (p, x) := p$, for all $x\in V$
and $p\in P$. The algebra $P \star V$ is called the
\emph{metabelian product} of $P$ over $V$ associated to $(P, \, V,
\, \triangleleft, \, \triangleright, \, \theta) \in {\rm Met} \,
(P, \, V)$. The next characterization and structure theorem for
metabelian algebras shows that any metabelian algebra $A$ is
isomorphic to some $P \star V$.

\begin{theorem}\thlabel{carmatab}
For an associative algebra $A$ the following statements are
equivalent:

$(1)$ $A$ is metabelian;

$(2)$ $A$ is a nilpotent algebra having the index of nilpotency at
most $3$, i.e. $x y z t = 0$, for all $x$, $y$, $z$, $t \in A$;

$(3)$ The derived algebra $A'$ is an abelian subalgebra of $A$;

$(4)$ There exists an isomorphism of algebras $A \cong P\star V$,
for some vector spaces $P$ and $V$ and for some $(P, \, V, \,
\triangleleft, \, \triangleright, \, \theta) \in {\rm Met} \, (P,
\, V)$.
\end{theorem}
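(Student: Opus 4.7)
The plan is to reduce everything to $(1) \Leftrightarrow (3)$ and $(1) \Leftrightarrow (4)$, since $(2) \Leftrightarrow (3)$ is tautological: $A'$ is by definition the linear span of products $xy$, so the condition $xyzt = 0$ for all $x, y, z, t \in A$ is literally $A' \cdot A' = 0$, which in turn is exactly what it means for the subalgebra $A'$ to be abelian.

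For $(1) \Rightarrow (3)$, given the extension $0 \to V_0 \to A \to P_0 \to 0$, I identify $V$ with $i(V) \subseteq A$; since $P_0$ is abelian, the image under $\pi$ of any product $xy$ vanishes, so $A' \subseteq V$, whence $A' \cdot A' \subseteq V \cdot V = 0$. For $(3) \Rightarrow (1)$, I note that $A'$ is always a two-sided ideal of $A$ (the products $(xy)z$ and $z(xy)$ are, by definition, in $A'$), and the quotient algebra $A/A'$ is abelian by construction; combined with $A'$ being abelian by hypothesis, this gives the extension $0 \to A' \to A \to A/A' \to 0$ exhibiting $A$ as metabelian.

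The direction $(4) \Rightarrow (1)$ is already recorded as the exact sequence \equref{extenho1} displayed just before the theorem. For $(1) \Rightarrow (4)$, I choose any $k$-linear section $s \colon P \to A$ of $\pi$, identify $V$ with $i(V)$, and define
$$
p \triangleright x := s(p)\, x, \qquad x \triangleleft p := x\, s(p), \qquad \theta(p, q) := s(p)\, s(q),
$$
for $p, q \in P$ and $x \in V$. These land in $V$ because their $\pi$-images vanish in the abelian algebra $P_0$. The axioms \equref{disc1} and \equref{discoc} then follow from associativity in $A$ together with $V \cdot V = 0$: for instance $(x \triangleleft p) \triangleleft q = x(s(p) s(q)) = 0$ since $s(p) s(q) \in V$ and $x \in V$, and the cocycle relation is $s(p)(s(q)s(r)) = (s(p)s(q))s(r)$. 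Finally, the $k$-linear bijection $\varphi \colon P \star V \to A$, $\varphi(p, x) := s(p) + x$, is multiplicative: expanding $(s(p) + x)(s(q) + y)$, the term $xy$ vanishes because $V$ is abelian, leaving precisely $\theta(p, q) + p \triangleright y + x \triangleleft q$.

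No single step presents a real obstacle; the conceptual content is that "metabelian" collapses to the nilpotency condition $A' \cdot A' = 0$, and that a $k$-linear (rather than algebraic) splitting of the abelian quotient is what converts the extrinsic extension data into the intrinsic combinatorial datum $(\triangleleft, \triangleright, \theta)$.
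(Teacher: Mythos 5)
Your proposal is correct and follows essentially the same route as the paper: the tautological identification of $(2)$ with $(3)$, the exact sequence $0 \to A'_0 \to A \to (A/A')_0 \to 0$ for $(3) \Rightarrow (1)$, the observation that $A' \subseteq V = \Ker \pi$ for $(1) \Rightarrow (2)/(3)$, and a $k$-linear section $s$ of $\pi$ producing the datum $(\triangleleft_s, \triangleright_s, \theta_s)$ and the isomorphism $\varphi(p,x) = s(p) + x$ for $(1) \Rightarrow (4)$. The only difference is that you spell out the verifications of \equref{disc1}, \equref{discoc} and the multiplicativity of $\varphi$, which the paper delegates to a ``straightforward computation'' and a citation.
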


\begin{proof}
$(3)$ is just an equivalent rephrasing of $(2)$, i.e. $(2)
\Leftrightarrow (3)$. The exactness of the sequence
\equref{extenho1} proves that $P \star V$ is metabelian, that is
$(4) \Rightarrow (1)$. Now, if $A'$ is an abelian subalgebra of
$A$, we have an exact sequence of algebra maps
\begin{eqnarray} \eqlabel{extenho2b}
\xymatrix{ 0 \ar[r] & A' = A'_0 \ar[r]^{i} & A \ar[r]^{\pi} &
(A/A')_0 \ar[r] & 0 }
\end{eqnarray}
where $i$ is the inclusion map and $\pi$ the canonical projection.
This proves $(3) \Rightarrow (1)$. We prove now that $(1)
\Rightarrow (2)$. Assume that \equref{aldoileasir} is an exact
sequence of algebra maps. We can assume that $V$ is a subspace of
$A$. The fact that $i$ is an algebra map shows that $x y = 0$, for
all $x$, $y\in V$. On the other hand, the fact that $\pi$ is a
morphism of algebras and the exactness of the sequence
\equref{aldoileasir} translates to $\pi ( a b) = 0$, i.e. $a b \in
{\rm Ker} (\pi) = V$, for all $a$, $b\in A$. Thus, the product
$xyzt = 0$, for all $x$, $y$, $z$ and $t \in A$ since the product
of any two elements of $V$ is zero. If we show $(1) \Rightarrow
(4)$ the proof is finished. Since $k$ is a field we can pick a
$k$-linear section $s : P \to A$ of $\pi$, i.e. $\pi \circ s =
{\rm Id}_{P}$. Using the section $s$ we define three bilinear maps
$\triangleleft_{s} \, : V \times P \to V$, $\triangleright_{s} \,
: P \times V \to V$ and $\theta_s \, : P \times P \to V$ by the
following formulas:
\begin{eqnarray*}
x \triangleleft p := x s(p) \eqlabel{act1}, \qquad p
\triangleright x := s(p) x, \qquad \theta (p, q) := s(p) s(q)
\end{eqnarray*}
for all $p$, $q\in P$ and $x\in V$. Then, by a straightforward
computation (or as a special case of \cite[Proposition
1.4]{am-2013d}) we can show that $(P, \, V, \, \triangleleft_s, \,
\triangleright_s, \, \theta_s) \in {\rm Met} \, (P, \, V)$ and the
map
\begin{equation} \eqlabel{izomor}
\varphi : P \star V \to A, \qquad \varphi (p, x) := s(p) + x
\end{equation}
is an isomorphism of algebras with $\varphi^{-1} (y) =
(\pi(y), \, y - s (\pi(y)) )$, for all $y\in A$.
\end{proof}

One of the fundamental results in the factorization theory for
groups is the famous It\^{o}'s theorem \cite{ito} which has been
the key ingredient in proving many structural theorems for finite
groups \cite{AFG}: if $G$ is a group such that $G = AB$, for two
abelian subgroups $A$ and $B$, then $G$ is metabelian. As a
special case of \thref{carmatab} we obtain the counterpart of
It\^{o}'s theorem for associative algebras:

\begin{corollary} \colabel{itoass} \textbf{(It\^{o}'s theorem for associative
algebras)} Let $A$ be an associative algebra such that $A = P_{0}
+ V_{0}$, for two abelian subalgebras $P_{0}$ and $V_{0}$ of $A$.
Then $A$ is metabelian.
\end{corollary}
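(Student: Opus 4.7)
The plan is to verify condition $(2)$ of \thref{carmatab}, namely that $xyzt = 0$ for all $x$, $y$, $z$, $t \in A$. Since by hypothesis every element of $A$ decomposes as $p + v$ with $p \in P_0$ and $v \in V_0$, multilinearity of the product reduces the task to the case in which each of the four factors individually lies in $P_0$ or in $V_0$.

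Next I would observe that in any such four-fold product, if two adjacent factors belong to the same subalgebra their product is already zero by abelianness of $P_0$ or $V_0$, and associativity then makes the entire product vanish. Hence the only patterns that require genuine attention are the two strictly alternating ones,
$$
p_1 v_1 p_2 v_2 \quad \text{and} \quad v_1 p_1 v_2 p_2,
$$
with $p_i \in P_0$ and $v_i \in V_0$.

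The key step is to apply the hypothesis $A = P_0 + V_0$ to the \emph{middle} factor of each alternating product. Writing $v_1 p_2 = q + w$ with $q \in P_0$ and $w \in V_0$, associativity yields
$$
p_1 v_1 p_2 v_2 = p_1 (v_1 p_2) v_2 = (p_1 q) v_2 + p_1 (w v_2) = 0,
$$
since $p_1 q = 0$ in the abelian algebra $P_0$ and $w v_2 = 0$ in the abelian algebra $V_0$. The pattern $v_1 p_1 v_2 p_2$ is handled symmetrically by decomposing $p_1 v_2 \in P_0 + V_0$. This establishes condition $(2)$ of \thref{carmatab}, so $A$ is metabelian.

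There is essentially no obstacle beyond spotting where to feed the decomposition $A = P_0 + V_0$ into the computation: it must be applied to the interior product $v_1 p_2$ (or $p_1 v_2$), not to the outer factors, so that associativity can pair each resulting summand with one of the outer factors and produce a product of two elements of the same abelian subalgebra.
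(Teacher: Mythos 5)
Your proof is correct and follows essentially the same route as the paper: reduce to condition $(2)$ of \thref{carmatab}, expand via $A = P_0 + V_0$, discard all patterns with adjacent factors from the same abelian subalgebra, and kill the two surviving alternating products by decomposing the \emph{middle} product $v_1 p_2$ (resp.\ $p_1 v_2$) back into $P_0 + V_0$. The only difference is organizational — you isolate the alternating patterns up front rather than expanding the full product as the paper does — and the key step is identical.
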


\begin{proof}
Using \thref{carmatab} we have to prove that $a_{1} \, a_{2} \,
a_{3} \, a_{4} = 0$, for all $a_{i} \in A$, $i = 1, \cdots, 4$.
Indeed, since $a_{i} \in A = P_{0} + V_{0}$ we can find $v_{i} \in
V_{0}$ and $p_{i} \in P_{0}$ such that $a_{i} = v_{i} + p_{i}$,
for all $i = 1, \cdots, 4$. Let $v_{5}$, $v_{6} \in V_{0}$ and
$p_{5}$, $p_{6} \in P_{0}$ such that $p_{2} \, v_{3} = v_{5} +
p_{5}$ and $v_{2} \, p_{3} = v_{6} + p_{6}$. Using intensively
that $P_{0}$ and $V_{0}$ are both abelian we obtain:
\begin{eqnarray*}
a_{1} \, a_{2} \, a_{3} \, a_{4} &=& (\underline{v_{1}\, v_{2}} +
v_{1}\, p_{2} + p_{1}\,v_{2} + \underline{p_{1} \, p_{2}})
(\underline{v_{3}\, v_{4}} + v_{3}\,
p_{4} + p_{3}\,v_{4} + \underline{p_{3} \, p_{4}})\\
&=& (v_{1}\, p_{2} + p_{1}\,v_{2}) (v_{3}\, p_{4} + p_{3}\,v_{4})\\
&=& v_{1}\,p_{2}\,v_{3}\,p_{4} + v_{1}\,\underline{p_{2}\,p_{3}}\,
v_{4} + p_{1}\,\underline{v_{2}\,v_{3}}\,p_{4} + p_{1}\,v_{2}\, p_{3}\, v_{4}\\
&=& v_{1}\,p_{2}\,v_{3}\,p_{4} + p_{1}\,v_{2}\, p_{3}\, v_{4} \, =
\, v_{1} \, (v_{5} + p_{5}) \, p_{4} + p_{1}\,(v_{6} + p_{6})\,v_{4}\\
&=& \underline{v_{1}\,v_{5}}\,p_{4} +
v_{1}\,\underline{p_{5}\,p_{4}} + p_{1}\,\underline{v_{6}\,v_{4}}
+ \underline{p_{1}\,p_{6}}\,v_{4} \, = \,  0
\end{eqnarray*}
as needed.
\end{proof}

\begin{remark}\relabel{clascoh}
There is more to be said related to the proof of \thref{carmatab}:
if $A$ is a metabelian algebra, extension of $P_0$ by $V_0$, then
the isomorphism $\varphi: P \star V \to A$ given by
\equref{izomor} stabilizes $V$ and co-stabilizes $P$, i.e. the
diagram
\begin{eqnarray*}
\xymatrix {& V_0 \ar[r]^{i_V} \ar[d]_{Id} & {P\star V}
\ar[r]^{\pi_p} \ar[d]^{\varphi} & P_0 \ar[d]^{Id}\\
& V_0 \ar[r]^{i} & {A}\ar[r]^{\pi} & P_0}
\end{eqnarray*}
is commutative. Hence, the extension $(A, i, p)$ of $P_0$ by $V_0$
is equivalent to the extension $(P\star V, i_V, \pi_P)$ of $P_0$
by $V_0$ given by \equref{extenho1}. Thus the classification of
metabelian algebras is reduced to the classification of all
metabelian products of $P\star V$ associated to all $(P, \, V, \,
\triangleleft, \, \triangleright, \, \theta) \in {\rm Met} \, (P,
\, V)$. This will be given below by indicating the explicit
description of the classifying object ${\rm Ext} (P_0, V_0)$.
\end{remark}

Let $(P, \triangleleft, \triangleright)$ be a fixed discrete
bimodule over a vector space $V$. Two discrete $(\triangleleft,
\triangleright)$-cocycles $\theta$, $\theta': P\times P \to V$ are
called \emph{cohomologous} and we denote this by $\theta \equiv
\theta'$ if there exists a linear map $r: P \to V$ such that
\begin{equation}\eqlabel{discocho}
\theta(p, \, q) = \theta' (p, \, q) + p \triangleright r(q) + r(p)
\triangleleft q
\end{equation}
for all $p$, $q\in P$. As a special case of \cite[Lemma 1.5 and
Definition 1.6]{am-2013d} we can prove that $\equiv$ is an
equivalence relation on ${\rm DZ}^2 \, \bigl((P, \triangleleft,
\triangleright), \, V \bigl)$ and denote by
$$
{\rm DH}^2 \, \bigl((P, \triangleleft, \triangleright), \, V
\bigl) \, := \, {\rm DZ}^2 \, \bigl((P, \triangleleft,
\triangleright), \, V \bigl)/\equiv
$$
the quotient set via $\equiv$, called the \emph{discrete
cohomological group.} In fact, \cite[Lemma 1.5]{am-2013d}, applied
for the abelian case, shows that two extensions of $P_0$ by $V_0$
of the form $(P\star_{(\triangleleft, \, \triangleright, \,
\theta)} V, i_V, \pi_P)$ and $(P\star_{(\triangleleft', \,
\triangleright', \, \theta)} V, i_V, \pi_P)$ are equivalent if and
only if $\triangleleft' = \triangleleft$, $\triangleright' =
\triangleright$ and $\theta' \equiv \theta$. All these
considerations lead to the following classification result that
gives the decomposition of ${\rm Ext} (P_0, V_0)$ as the
co-product of all discrete cohomological groups.

\begin{theorem}\thlabel{clascoho}
Let $V$ and $P$ be two vector spaces. Then there exists a
bijection
\begin{equation} \eqlabel{formulamare}
{\rm Ext} (P_0, V_0) \cong \, \sqcup_{(\triangleleft,
\triangleright)} \, {\rm DH}^2 \, \bigl((P, \triangleleft,
\triangleright), \, V \bigl)
\end{equation}
where $\sqcup_{(\triangleleft, \triangleright)}$ is the co-product
in the category of sets over all possible discrete bimodule
structures $(P, \triangleleft, \triangleright)$ over $V$. The
explicit bijection sends an element $\overline{\theta} \in {\rm
DH}^2 \, \bigl((P, \triangleleft, \triangleright), \, V \bigl)$ to
the metabelian product $ P\star_{(\triangleleft, \,
\triangleright, \, \theta)} \, V$, where $\overline{\theta}$
denotes the equivalence class of the discrete $(\triangleleft,
\triangleright)$-cocycle $\theta$ via $\equiv$.
\end{theorem}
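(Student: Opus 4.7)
The plan is to derive the bijection \equref{formulamare} by combining \thref{carmatab} with an explicit description of when two metabelian products represent equivalent extensions of $P_0$ by $V_0$. For any fixed discrete bimodule structure $(P, \triangleleft, \triangleright)$ and any discrete $(\triangleleft, \triangleright)$-cocycle $\theta$, the exact sequence \equref{extenho1} exhibits $(P \star_{(\triangleleft, \triangleright, \theta)} V, \, i_V, \, \pi_P)$ as an extension of $P_0$ by $V_0$, so sending $\theta$ to the equivalence class of this extension defines a map from ${\rm DZ}^2 \, \bigl((P, \triangleleft, \triangleright), \, V \bigr)$ to ${\rm Ext}(P_0, V_0)$. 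Assembling these over all discrete bimodule structures on $(P, V)$ yields the candidate map from the co-product.

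Surjectivity of this candidate map follows immediately from \thref{carmatab} and \reref{clascoh}: for any extension $(A, i, \pi)$ of $P_0$ by $V_0$, the isomorphism $\varphi: P \star V \to A$ of \equref{izomor} associated to a $k$-linear section $s$ of $\pi$ stabilizes $V_0$ and co-stabilizes $P_0$, so $(A, i, \pi) \approx (P \star V, \, i_V, \, \pi_P)$ for the metabelian datum $(P, V, \triangleleft_s, \triangleright_s, \theta_s) \in {\rm Met}(P, V)$.

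The core of the proof is to show that this map descends to, and is injective on, $\sqcup_{(\triangleleft, \triangleright)} {\rm DH}^2 \, \bigl((P, \triangleleft, \triangleright), \, V \bigr)$. Suppose $(P \star_{(\triangleleft, \triangleright, \theta)} V, i_V, \pi_P)$ and $(P \star_{(\triangleleft', \triangleright', \theta')} V, i_V, \pi_P)$ are equivalent via an algebra isomorphism $\varphi$. The requirement that $\varphi$ stabilize $V$ and co-stabilize $P$ forces $\varphi(0, x) = (0, x)$ for all $x \in V$ and $\pi_P(\varphi(p, 0)) = p$ for all $p \in P$, so $\varphi$ has the unique form $\varphi(p, x) = (p, \, x + r(p))$ for some linear map $r: P \to V$. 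Expanding $\varphi\bigl((p, x) \star (q, y)\bigr) = \varphi(p, x) \star' \varphi(q, y)$ through \equref{hoproduct2} and separating the terms bilinear in $x$ (resp.\ $y$) from those depending only on $p, q$ produces exactly $\triangleleft' = \triangleleft$, $\triangleright' = \triangleright$, and $\theta(p, q) = \theta'(p, q) + p \triangleright r(q) + r(p) \triangleleft q$, which is precisely \equref{discocho}. Conversely, any such $r$ gives an equivalence of extensions via the same formula.

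The main technical step is the term-by-term matching just sketched; it is the abelian specialisation of the lemma already cited in \reref{clascoh}, and no genuine difficulty arises beyond bookkeeping. The one subtlety to emphasise is that the disjoint union in \equref{formulamare} cannot be collapsed into a single quotient set: two discrete bimodule structures on the same pair $(P, V)$ may admit a common cocycle $\theta$ and yet give inequivalent extensions, because the equivalence relation $\approx$ on extensions forces $\triangleleft$ and $\triangleright$ to be preserved along with the cohomology class of $\theta$.
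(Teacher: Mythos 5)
Your proposal is correct and follows essentially the same route as the paper: surjectivity via \thref{carmatab} and \reref{clascoh}, and injectivity/well-definedness by showing that an equivalence of extensions must have the form $\varphi(p,x)=(p,\,x+r(p))$, whose multiplicativity forces $\triangleleft'=\triangleleft$, $\triangleright'=\triangleright$ and the relation \equref{discocho}. The only difference is that you work out explicitly the computation that the paper delegates to the cited \cite[Lemma 1.5]{am-2013d}, and your term-by-term matching (set $x=y=0$ to isolate the cocycle condition, then vary $x$ and $y$ to force the actions to coincide) is the correct verification of that step.
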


\begin{remarks} \relabel{spargerea}
$(1)$ The formula \equref{formulamare} highlights an algorithm
that breaks up the problem of computing ${\rm Ext} (P_0, V_0)$
into three steps: (1) first, all discrete bimodule structures $(P,
\triangleleft, \triangleright)$ over $V$ are described; (2) then,
for a given structure $(P, \triangleleft, \triangleright)$, we
compute the space ${\rm DZ}^2 \, \bigl((P, \triangleleft,
\triangleright), \, V \bigl)$ of all discrete $(\triangleleft,
\triangleright)$-cocycles; (3) in the last step all quotient
spaces ${\rm DH}^2 \, \bigl((P, \triangleleft, \triangleright), \,
V \bigl) = {\rm DZ}^2 \, \bigl((P, \triangleleft, \triangleright),
\, V \bigl)/\approx$ are described and their co-product is
computed. Specific examples are given below.

$(2)$ If we are interested in the classification of all metabelian
algebras $A$ of a given dimension then the key object is the
derived algebra $A'$ since \thref{carmatab} shows that $A \cong P
\star A'$, where $P := A/A'$, the quotient vector space viewed as
an abelian algebra. Thus, there are two numbers involved in the
problem of classifying finite dimensional metabelian algebras:
$n$, the dimension of the metabelian algebras that we are looking
for, and $m := {\rm dim}_k (A') \leq n$. Two border-line cases are
immediately settled: if $m = n$, then $A' = A$, and thus we
obtain, using $(3)$ of \thref{carmatab}, that $A$ is the abelian
algebra $k^n_0$. On the other hand, if $m = 0$, that is $A' = 0$,
then $A$ is also the abelian algebra $k^n_0$, by the definition of
$A'$. The next two steps cover the cases when $m = 1$ (resp. $m =
n-1$).
\end{remarks}

First we shall describe and classify metabelian algebras having
the derived algebra of dimension $1$; the group of algebra
automorphisms of such algebras is also determined.

\begin{theorem}\thlabel{codim1totul}
Let $P$ be a vector space. Then:

$(1)$ ${\rm Ext} \, (P_0, \, k_0) \cong {\rm Bil} \, (P \times P,
\, k)$ and the equivalence classes of all metabelian algebras that
are extensions of $P_0$ by $k_0$ are represented by the extensions
of the form
\begin{eqnarray} \eqlabel{codim1gener}
\xymatrix{ 0 \ar[r] & k_0 \ar[r] & P_{\theta} := P \star_{\theta}
\, k \ar[r] & P_0 \ar[r] & 0 }
\end{eqnarray}
for any $\theta \in {\rm Bil} \, (P \times P, \, k)$; the
multiplication on the algebra $P_{\theta}$ is given by: $ (p, \,
x) \star (q, \, y) = (0, \, \theta (p, \, q) )$, for all $p$,
$q\in P$, $x$, $y \in k$.

Any metabelian algebra having the derived algebra of dimension $1$
is isomorphic to a $P_{\theta}$, for some vector space $P$ and
$\theta \in {\rm Bil} \, (P \times P, \, k)$.

$(2)$ Two algebras $P_{\theta}$ and $P_{\theta'}$ are isomorphic
if and only if the bilinear forms $\theta$ and $\theta$ are
homothetic, i.e. there exists a pair $(u, \psi) \in k^* \times
{\rm Aut}_k (P)$ such that for all $p$, $q\in P$
\begin{equation}\eqlabel{clasificgenfor}
u \, \theta (p, \, q) = \theta' \bigl( \psi(p), \, \psi (q)
\bigl)
\end{equation}
In particular, if $k = k^2$ then, $P_{\theta} \cong P_{\theta'}$
if and only if $\theta$ and $\theta'$ are isometric.

$(3)$ The group of algebra automorphisms ${\rm Aut}_{\rm Alg} \,
(P_{\theta})$ is isomorphic to
$$
{\mathcal G} (P, \theta) := \{ (u, \lambda, \psi) \in k^* \times
P^* \times {\rm Aut}_k (P) \, | \,\, u \, \theta (p, \, q) =
\theta \bigl( \psi(p), \, \psi (q) \bigl), \, \forall \, p, q \in
P \}
$$
where ${\mathcal G} (P, \theta)$ is a group with respect to the
following multiplication:
\begin{equation} \eqlabel{grupul}
(u, \lambda, \psi) \cdot (u', \lambda', \psi') := (uu', \, \lambda
\circ \psi' + u \lambda', \, \psi \circ \psi')
\end{equation}
for all $(u, \lambda, \psi)$ and $(u', \lambda', \psi') \in
{\mathcal G} (P, \theta)$.
\end{theorem}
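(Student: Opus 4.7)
For part (1), the plan is to specialize \thref{clascoho} to $V = k$, which should collapse the classification dramatically. First I would classify discrete bimodule structures over $k$: a bilinear $\triangleleft: k \times P \to k$ must have the form $x \triangleleft p = f(p)\,x$ for some $f \in P^*$, and the axiom $(x \triangleleft p) \triangleleft q = 0$ quickly forces $f(p)^2 = 0$, hence $f = 0$; similarly $\triangleright = 0$. Thus the only discrete bimodule structure over $k$ is the trivial one. With $\triangleleft = \triangleright = 0$, the cocycle condition \equref{discoc} is vacuous and the cohomology relation \equref{discocho} collapses to equality, so ${\rm DH}^2 = {\rm DZ}^2 = {\rm Bil}(P \times P, k)$, and \thref{clascoho} yields ${\rm Ext}(P_0, k_0) \cong {\rm Bil}(P \times P, k)$ together with the asserted multiplication on $P_\theta$ via \equref{hoproduct2}. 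The second assertion in (1) then follows by applying \thref{carmatab} to the canonical exact sequence $0 \to A'_0 \to A \to (A/A')_0 \to 0$ (cf.\ \reref{spargerea}(2)), giving $A \cong (A/A') \star A' \cong P_\theta$ when $\dim A' = 1$.

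For part (2), I would analyze an arbitrary algebra isomorphism $\varphi: P_\theta \to P_{\theta'}$ (as opposed to an equivalence of extensions) by writing it in block form relative to the decomposition $P \oplus k$, namely $\varphi(p, x) = (\psi(p) + x\,v,\, \lambda(p) + u\,x)$ for some $\psi \in {\rm End}_k(P)$, $\lambda \in P^*$, $v \in P$, $u \in k$. Imposing $\varphi((p, 0) \star (q, 0)) = \varphi(p, 0) \star \varphi(q, 0)$ yields
\[
\bigl(\theta(p, q)\,v,\, u\,\theta(p, q)\bigr) = \bigl(0,\, \theta'(\psi(p), \psi(q))\bigr)
\]
for all $p, q \in P$. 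The crucial move is to rule out the cross term: excluding the trivial case $\theta = 0$ (where both algebras are abelian and the claim holds by inspection), nonvanishing of $\theta$ forces $v = 0$, leaving exactly the homothety identity, while bijectivity of $\varphi$ translates to $u \in k^*$ and $\psi \in {\rm Aut}_k(P)$. The converse is a direct check using $\varphi(p, x) := (\psi(p), u\,x)$, and the specialization to $k = k^2$ is immediate from the discussion of homothety versus isometry in \seref{prel}.

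For part (3), specializing (2) to $\theta = \theta'$ shows that every element of ${\rm Aut}_{\rm Alg}(P_\theta)$ has the form $\varphi_{(u, \lambda, \psi)}(p, x) := (\psi(p), \lambda(p) + u\,x)$ with $(u, \lambda, \psi) \in {\mathcal G}(P, \theta)$, the parameter $\lambda \in P^*$ being free because it drops out of the multiplicativity equation. A direct composition computation then gives
\[
\bigl(\varphi_{(u, \lambda, \psi)} \circ \varphi_{(u', \lambda', \psi')}\bigr)(p, x) = \bigl((\psi \circ \psi')(p),\, (\lambda \circ \psi' + u\,\lambda')(p) + u\,u'\,x\bigr),
\]
matching \equref{grupul} exactly, so $(u, \lambda, \psi) \mapsto \varphi_{(u, \lambda, \psi)}$ is the asserted group isomorphism. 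The only genuinely non-routine step in this plan is the block-form analysis in (2), in particular ruling out the cross term $v$ when $\theta \neq 0$; everything else is essentially mechanical bookkeeping on top of \thref{clascoho} and \thref{carmatab}.
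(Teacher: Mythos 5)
Your proposal is correct and follows essentially the same route as the paper: specializing \thref{clascoho} to $V=k$ (with the observation that the axiom $(x\triangleleft p)\triangleleft q=0$ kills every nontrivial bimodule structure over $k$), then writing an arbitrary isomorphism $P_\theta\to P_{\theta'}$ in block form and using $\theta\neq 0$ to force the cross term to vanish, exactly as the paper does with its quadruple $(u,\lambda,\psi,p_0)$. The only difference is cosmetic: the paper spells out the verification that bijectivity of $\varphi_{(u,\lambda,\psi)}$ is equivalent to $u\in k^*$ and $\psi\in{\rm Aut}_k(P)$, which you assert as routine.
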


\begin{proof}
$(1)$ The proof is based on \thref{carmatab} and \thref{clascoho}
following the three steps described in \reref{spargerea} applied
in the case that $V := k$. Using the first compatibility of
\equref{disc1} we can easily prove that $P$ has only one structure
of a discrete bimodule over $k$, namely the trivial one: $ x
\triangleleft p = p \triangleright x :=0$, for all $x\in k$ and $p
\in P$. Hence, the set of all discrete $(\triangleleft,
\triangleright)$-cocycles $\theta : P \times P \to k$ is precisely
the space of all bilinear forms of $P$ since \equref{discoc} holds
trivially. Moreover, the equivalence relation \equref{discocho} is
just the equality between the two bilinear maps. Thus, using the
decomposition formula given by \equref{formulamare}, we obtain
that ${\rm Ext} \, (P_0, \, k_0) \cong {\rm Bil} \, (P \times P,
\, k)$. The last statement follows from \thref{carmatab}.

$(2)$ If $\theta = 0$ (the trivial bilinear form), then $P_{\theta
= 0}$ is the abelian algebra; thus, if $P_{\theta = 0} \cong
P_{\theta'}$, then $\theta' = 0$ and there is nothing to prove. We
will assume that $\theta \neq 0$. We prove a more general
statement which shall also provide the proof of $(3)$. More
precisely, for two non-trivial bilinear forms $\theta$ and
$\theta'$ on $P$ we shall prove that there exists a bijection
between the set of all isomorphisms of algebras $\varphi :
P_{\theta} \to P_{\theta'}$ and the set of all triples $(u,
\lambda, \psi) \in k^* \times {\rm Hom}_k (P, \, k) \times {\rm
Aut}_k (P)$ satisfying the compatibility condition
\equref{clasificgenfor}. Moreover, the bijection is given such
that the isomorphism $\varphi = \varphi_{(u, \lambda, \psi)} :
P_{\theta} \to P_{\theta'}$ corresponding to $(u, \lambda, \psi)$
is given by
\begin{equation}\eqlabel{formulamor}
\varphi (p, \, x) = (\psi (p), \, \lambda(p) + u\,x)
\end{equation}
for all $p\in P$ and $x\in k$. Indeed, any $k$-linear map $\varphi
: P \times k \to P \times k$ is uniquely determined by a quadruple
$(u, \, \lambda, \, \psi, \, p_0) \in k\times {\rm Hom}_k (P, \,
k) \times {\rm End}_k (P) \times P$ such that
$$
\varphi (p, \, x) = \varphi_{(u, \, \lambda, \, \psi, \, p_0)} \,
(p, \, x) = (\psi (p) + x \, p_0,  \,\, \lambda(p) + x \,u)
$$
for all $p \in P$ and $x\in k$. Now, we can easily see that
$\varphi_{(u, \, \lambda, \, \psi, \, p_0)} : P_{\theta} \to
P_{\theta'}$ is an algebra map if and only if the following two
compatibilities hold
$$
\theta (p, \, q) \, p_0 = 0, \quad  u \, \theta (p, \, q) =
\theta' \bigl( \psi(p) + x\, p_0, \, \psi (q) + y \, p_0\bigl)
$$
for all $p$, $q\in P$. Since $\theta \neq 0$ we obtain that
$\varphi_{(u, \, \lambda, \, \psi, \, p_0)}$ is an algebra map if
and only if $p_0 = 0$ and \equref{clasificgenfor} holds. In what
follows we denote by $\varphi_{(u, \, \lambda, \, \psi)}$ the
algebra map corresponding to a quadruple $(u, \, \lambda, \, \psi,
\, p_0)$ with $p_0 = 0$. It remains to be proven that such a
morphism $\varphi = \varphi_{(u, \, \lambda, \, \psi)}$ is
bijective if and only if $\psi$ is bijective and $u \neq 0$.
Assume first that $\varphi$ is bijective: then its inverse
$\varphi^{-1}$ is an algebra map and thus has the form
$\varphi^{-1} (q, \, y) = (\psi' (q), \, \lambda' (q) + y u')$,
for some triple $(u', \lambda', \psi')$. If we write $\varphi^{-1}
\circ \varphi (0, \, 1) = (0, \, 1)$ we obtain that $u u' = 1$
i.e. $u$ is invertible. In the same way $\varphi^{-1} \circ
\varphi (p, \, 0) = (p, \, 0) = \varphi \circ \varphi^{-1} (p, \,
0)$ gives that $\psi$ is bijective and $\psi' = \psi^{-1}$.
Conversely, if $(u, \psi) \in k^* \times {\rm Aut}_k (P)$ then we
can see that $\varphi_{(u, \, \lambda, \, \psi)}$ is bijective
having the inverse given by $\varphi_{(u, \, \lambda, \,
\psi)}^{-1} := \varphi_{(u^{-1}, \, -\lambda \circ \psi^{-1}, \,
\psi^{-1})}$ as needed. For the last statement we remark that if
$\theta \approx \theta'$ then \equref{clasificgenfor} holds for $u
= 1$. Conversely, if $k = k^2$, then we can write $u = v^2$, for
some $v \in k^*$. Multiplying the equation \equref{clasificgenfor}
by $v^{-2}$ and substituting $\psi$ with $v^{-1} \psi$ we obtain
that $\theta \approx \theta'$.

$(3)$ Follows the proof of $(2)$ once we observe that for two
triples $(u, \lambda, \psi)$ and  $(u', \lambda', \psi') \in k^*
\times P^* \times {\rm Aut}_k (P)$ we have that $\varphi_{(u, \,
\lambda, \, \psi)} \circ \varphi_{(u', \, \lambda', \, \psi')} =
\varphi_{(uu', \, \lambda \circ \psi' + u \lambda', \, \psi \circ
\psi')} $.
\end{proof}

\thref{codim1totul} reduces the classification of all
$(n+1)$-dimensional metabelian algebras having the derived algebra
of dimension $1$ to the classification of bilinear forms on $k^n$
up to the equivalence relation given by \equref{clasificgenfor}.
If $k = k^2$ this is just the classical classification of bilinear
forms solved in \cite{horn} for algebraically closed or real
closed fields.

\begin{example} \exlabel{exdim1}
Let $\{e_1, \cdots, e_n\}$ be the canonical basis of $k^n$.
Applying \thref{codim1totul} for $P = k^n$ we obtain that any
$(n+1)$-dimensional metabelian algebra having the derived algebra
of dimension $1$ is isomorphic to an algebra denoted by
$k^{n+1}_{\theta} := k^n \star_{\theta} k$, for some nontrivial
bilinear form $\theta \in {\rm Bil} \, (k^n \times k^n, \, k)$.
Explicitly, $k^{n+1}_{\theta}$ is the algebra having $\{E, \, F_1,
\cdots, F_n\}$ as a basis and the multiplication defined for any
$i$, $j = 1, \cdots, n$ by:
$$
F_i \star F_j := \theta (e_i, e_j) \, E
$$
undefined multiplications on the elements of the basis are $0$.
Two such algebras $k^{n+1}_{\theta}$ and $k^{n+1}_{\theta'}$ are
isomorphic if and only if there exists a pair $(u, \, C) \in k^*
\times \mathfrak{gl} (n, k)$ such that $u\, \theta = C^T \,
\theta' \, C$, where we write $\theta = (\theta (e_i, \, e_j))$
and $\theta' = (\theta' (e_i, \, e_j)) \in {\rm M}_n (k)$.

Assume that $k$ is algebraically closed of characteristic $\neq
2$. Then, $k^{n+1}_{\theta}$ and $k^{n+1}_{\theta'}$ are
isomorphic if and only if $\theta \approx \theta'$. If $n = 2$ the
equivalence classes of all bilinear forms on $k^2$ are given by
the following two families of matrices \cite{horn}:
$$
\theta_{a, \, b} = \begin{pmatrix} 1 & a \\
b & 0 \end{pmatrix}, \qquad
\theta = \begin{pmatrix} 0 & 1 \\
-1 & 0 \end{pmatrix}
$$
for all $a$, $b\in k$. On the other hand, for $n = 3$ the
equivalence classes of all bilinear forms on $k^3$ are given by
the following six families of matrices \cite{horn} for any $a$,
$b\in k$:
$$
\theta^1_{a, \, b} =
\begin{pmatrix}
1 & 0 & 0 \\
0 & 1 & a \\
0 & b & 0
\end{pmatrix}, \qquad
\theta^2_{a, \, b} =
\begin{pmatrix}
0 & 0 & 0 \\
0 & 1 & a \\
0 & b & 0
\end{pmatrix}, \qquad
\theta^3 =
\begin{pmatrix}
1 & 0 & 0 \\
0 & 0 & 1 \\
0 & -1 & 0
\end{pmatrix}
$$
$$
\theta^4 =
\begin{pmatrix}
0 & 0 & 0 \\
0 & 0 & 1 \\
0 & -1 & 0
\end{pmatrix}, \qquad
\theta^5 =
\begin{pmatrix}
0 & 1 & 0 \\
0 & 0 & 1 \\
0 & -1 & 0
\end{pmatrix}, \qquad
\theta^6 =
\begin{pmatrix}
1 & 1 & 0 \\
0 & 1 & 1 \\
0 & 1 & 0
\end{pmatrix}
$$
\end{example}

To conclude, we obtain the following classification results:

\begin{corollary}\colabel{n23}
Let $k$ be an algebraically closed field of characteristic $\neq
2$. Then:

$(1)$ The isomorphism classes of $3$-dimensional metabelian
algebras having the derived algebra of dimension $1$ are the
following two families of algebras defined for any $a$, $b\in k$:
\begin{eqnarray*}
&k^3_{a, \, b}:& \qquad F_1 \star F_1 = E, \quad F_1 \star F_2 = a \, E, \quad F_2\star F_1 = b\, E\\
&k^3_{-1}:& \qquad F_1 \star F_2 = - F_2 \star F_1 = E
\end{eqnarray*}
$(2)$ The isomorphism classes of $4$-dimensional metabelian
algebras having the derived algebra of dimension $1$ are the
following six families of algebras defined for any $a$, $b\in k$:
\begin{eqnarray*}
&k^{4, 1}_{a, \, b}:& \qquad F_1 \star F_1 = F_2 \star F_2 = E, \quad F_2 \star F_3 = a \, E, \quad F_3\star F_2 = b\, E\\
&k^{4, 2}_{a, \, b}:& \qquad F_2 \star F_2 = E, \quad F_2 \star F_3 = a \, E, \quad F_3\star F_2 = b\, E \\
&k^{4, 3}:& \qquad F_1 \star F_1 = F_2 \star F_3 = - F_3 \star F_2 = E \\
&k^{4, 4}:& \qquad F_2 \star F_3 = - F_3 \star F_2 = E \\
&k^{4, 5}:& \qquad F_1 \star F_2 = F_2 \star F_3 = - F_3 \star F_2 = E \\
&k^{4, 6}:& \qquad F_1 \star F_1 = F_1 \star F_2 = F_2 \star F_2 =
F_2\star F_3 = F_3\star F_2 = E
\end{eqnarray*}
\end{corollary}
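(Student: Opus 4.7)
The plan is to obtain the corollary as an essentially immediate specialization of \thref{codim1totul} combined with \exref{exdim1}, so the work is mostly bookkeeping.

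First, I would invoke \thref{codim1totul}: any $(n+1)$-dimensional metabelian algebra with $1$-dimensional derived algebra is isomorphic to some $k^{n+1}_\theta = k^n \star_\theta k$, and two such algebras $k^{n+1}_\theta$ and $k^{n+1}_{\theta'}$ are isomorphic if and only if $\theta$ and $\theta'$ are homothetic. Since the hypothesis forces $k = k^2$ (any element of an algebraically closed field is a square), the last part of \thref{codim1totul}(2) applies and the homothetic relation collapses to the classical isometric relation on bilinear forms. Thus, for $n = 2$ (resp.\ $n = 3$), the isomorphism classes in question are in bijection with the isometry classes of bilinear forms on $k^2$ (resp.\ $k^3$).

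Next I would quote the normal forms listed in \exref{exdim1}: over an algebraically closed field of characteristic $\neq 2$, the isometry classes of bilinear forms on $k^2$ are exhausted by the two families $\theta_{a,b}$ and $\theta$, and those on $k^3$ by the six families $\theta^1_{a,b}, \theta^2_{a,b}, \theta^3, \theta^4, \theta^5, \theta^6$ (with reference to \cite{horn}). No further reduction of parameters is needed, since these are already the canonical representatives.

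Finally, to obtain the explicit multiplication tables displayed in the statement, I would simply apply the recipe from \exref{exdim1}: for a bilinear form $\theta$ with matrix $(\theta(e_i,e_j))$ in the canonical basis, the algebra $k^{n+1}_\theta$ has basis $\{E, F_1, \ldots, F_n\}$ and multiplication $F_i \star F_j = \theta(e_i, e_j)\, E$, with all other products zero. Reading off the nonzero entries of each of the two matrices in the $n=2$ case yields exactly $k^3_{a,b}$ and $k^3_{-1}$, and reading off the nonzero entries of each of the six matrices in the $n=3$ case yields $k^{4,1}_{a,b}$ through $k^{4,6}$. There is no genuine obstacle here: the only point worth checking is that, for the matrices in the $n=3$ list, the assignments $F_i \star F_j = \theta(e_i,e_j)\, E$ are transcribed without off-by-one errors in the indices, and this is a direct verification against the displayed matrices in \exref{exdim1}.
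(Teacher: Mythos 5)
Your proposal is correct and is exactly the paper's (implicit) argument: the corollary is stated as an immediate consequence of \thref{codim1totul} and the normal forms of bilinear forms recorded in \exref{exdim1}, with the observation that $k=k^2$ for an algebraically closed field so that the homothetic relation reduces to isometry. The transcription of the matrices into multiplication tables is also accurate.
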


Now we shall describe the metabelian algebras having the derived
algebra of codimension $1$. For a vector space $V$ we denote by
${\mathcal T} (V) \subseteq {\rm End}_k (V)^2$ the set of all
pairs of endomorphisms $(\lambda, \Lambda) \in {\rm End}_k (V)
\times {\rm End}_k (V)$ satisfying the following compatibility
condition
\begin{equation} \eqlabel{exp=k1}
\lambda^2 = \Lambda^2 = 0, \qquad \Lambda \circ \lambda = \lambda
\circ \Lambda
\end{equation}

\begin{theorem}\thlabel{codim1totulb}
Let $V$ be a vector space. Then:

$(1)$ There exists a bijection
$$
{\rm Met} \, (k, \, V) \cong \{ \, (\lambda, \, \Lambda, \, \zeta)
\in {\mathcal T} (V) \times V \,\, | \,\, \zeta \in {\rm Ker} (\Lambda
- \lambda) \, \}
$$
given such that the metabelian datum $(k, \, V, \, \triangleleft,
\, \triangleright, \theta)$ of $k$ by $V$ associated to a triple
$(\lambda, \, \Lambda, \, \zeta)$ is given for any $x \in V$ and $p$, $q \in k$ by:
\begin{equation}\eqlabel{50}
x \triangleleft p := p \, \lambda (x) \quad p \triangleright x :=
p \, \Lambda(x), \quad \theta (p, q) := pq \, \zeta
\end{equation}
$(2)$ There exists a bijection
\begin{equation} \eqlabel{codim1gen}
{\rm Ext} \, (k_0, \, V_0) \, \cong \, \sqcup_{(\lambda, \Lambda)
\in {\mathcal T} (V)} \, \bigl( {\rm Ker} (\Lambda -
\lambda)/{\rm Im} (\Lambda + \lambda) \bigl)
\end{equation}
that sends an element $\overline{\zeta} \in  {\rm Ker} (\Lambda -
\lambda)/ {\rm Im} (\Lambda + \lambda) \bigl)$ to the metabelian
product $k \star_{(\lambda, \Lambda, \zeta)} \, V = k\times V$,
with the multiplication given for any $p$, $q\in k$ and $x$, $y
\in V$ by:
\begin{equation} \eqlabel{51}
(p, \, x) \star (q, \, y) = ( 0, \, pq \, \zeta + p \, \Lambda (y) + q \, \lambda(x) )
\end{equation}
$(3)$ Any metabelian algebra having the derived algebra of codimension $1$ is isomorphic to
an algebra $k \star_{(\lambda, \Lambda, \zeta)} \, V$, for some vector space
$V$, $(\lambda, \, \Lambda) \in {\mathcal T} (V)$ and $\zeta
\in {\rm Ker} (\Lambda - \lambda)$.
\end{theorem}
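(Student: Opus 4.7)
The plan is to apply the three-step algorithm from \reref{spargerea} to the special case $P = k$, reducing each ingredient of a metabelian datum on $(k,V)$ to a small piece of linear data on $V$, and then to read off parts $(1)$, $(2)$, $(3)$ of the theorem from \thref{carmatab} and \thref{clascoho}.

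First, I would describe all discrete bimodule structures $(k,\triangleleft,\triangleright)$ over $V$. Any bilinear map $\triangleleft:V\times k\to V$ is determined by the endomorphism $\lambda \in \End_k(V)$, $\lambda(x):= x\triangleleft 1$, so that $x\triangleleft p = p\,\lambda(x)$, and similarly $\triangleright$ is determined by $\Lambda(x):= 1\triangleright x$ with $p\triangleright x = p\,\Lambda(x)$. Substituting into the compatibilities \equref{disc1}, an easy computation shows that $(x\triangleleft p)\triangleleft q = pq\,\lambda^{2}(x)$, $p\triangleright(q\triangleright x)= pq\,\Lambda^{2}(x)$, and the middle associativity $p\triangleright(x\triangleleft q)=(p\triangleright x)\triangleleft q$ is equivalent to $\lambda\circ\Lambda = \Lambda\circ\lambda$. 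Thus the discrete bimodules are in bijection with $\mathcal{T}(V)$, i.e.\ with pairs $(\lambda,\Lambda)$ satisfying \equref{exp=k1}.

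Next, for a fixed $(\lambda,\Lambda)\in\mathcal{T}(V)$, any bilinear map $\theta:k\times k\to V$ is uniquely given by $\zeta := \theta(1,1)\in V$ via $\theta(p,q)=pq\,\zeta$. Plugging this into the cocycle condition \equref{discoc} yields, after cancellation of $pqr$, the single relation $\Lambda(\zeta)=\lambda(\zeta)$, i.e.\ $\zeta\in\Ker(\Lambda-\lambda)$. This identifies ${\rm DZ}^{2}$ with $\Ker(\Lambda-\lambda)$ and, together with the previous step and formula \equref{hoproduct2}, proves $(1)$ (with the multiplication rule following by direct substitution, giving \equref{51}). For the cohomology relation \equref{discocho}, a linear map $r:k\to V$ is determined by $v:= r(1)\in V$; the defining equation becomes $pq\,\zeta = pq\,\zeta' + pq\,\Lambda(v)+pq\,\lambda(v)$, hence $\zeta-\zeta'\in\im(\Lambda+\lambda)$. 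Consequently
\[
{\rm DH}^{2}\bigl((k,\triangleleft,\triangleright),V\bigl)\,\cong\,\Ker(\Lambda-\lambda)/\im(\Lambda+\lambda),
\]
and inserting this in \thref{clascoho} gives the coproduct decomposition \equref{codim1gen} of $(2)$.

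Finally, $(3)$ is an immediate consequence of \thref{carmatab}$(4)$: any metabelian algebra $A$ with $\dim_k A/A'=1$ is isomorphic to a metabelian product $P\star V$ with $P=k$ and $V=A'$, and by $(1)$ every such product has the form $k\star_{(\lambda,\Lambda,\zeta)}V$ with $\zeta\in\Ker(\Lambda-\lambda)$. The only mildly delicate point is the bookkeeping between the abstract datum $(\triangleleft,\triangleright,\theta)$ and the triple $(\lambda,\Lambda,\zeta)$ in checking that the cocycle and coboundary conditions really reduce to the displayed kernel/image in $V$; but because $P=k$ is one-dimensional, no obstacle arises beyond evaluating each identity at $p=q=r=1$, so the whole proof is essentially a transcription of the general machinery already set up.
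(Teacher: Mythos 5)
Your proposal is correct and follows essentially the same route as the paper: identify discrete bimodule structures on $(k,V)$ with pairs $(\lambda,\Lambda)\in\mathcal{T}(V)$, cocycles with $\zeta\in\Ker(\Lambda-\lambda)$, the coboundary relation with $\im(\Lambda+\lambda)$, and then invoke \thref{clascoho} and \thref{carmatab}. The only detail worth adding is the paper's explicit remark that $\im(\Lambda+\lambda)\subseteq\Ker(\Lambda-\lambda)$ (immediate from $\lambda^2=\Lambda^2=0$ and $\lambda\Lambda=\Lambda\lambda$), so that the quotient in \equref{codim1gen} is well defined.
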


\begin{proof}
$(1)$ We apply the definitions of the metabelian datum for $P: =
k$. First of all we show that there exists a bijection between the
set of all discrete bimodule structures over $V$ on $k$ and
${\mathcal T} (V)$. Indeed, since $P = k$ then any bilinear map
$\triangleleft : V \times k \to V$ (resp. $\triangleright : k
\times V \to V$) is uniquely implemented by a linear map $\lambda
: V \to V$ (resp. $\Lambda : V \to V$) via the formulas
\begin{equation}\eqlabel{bla}
x \triangleleft p := p \lambda (x) \qquad p \triangleright x := p
\Lambda (x)
\end{equation}
for all $x\in V$ and $p\in k$. We can easily see that the pair $(
\triangleleft = \triangleleft_{\lambda}, \, \triangleright
=\triangleright_{\Lambda} )$ satisfies the compatibility condition
\equref{disc1} if and only if \equref{exp=k1} holds. Let now
$(\lambda, \Lambda) \in {\rm End}_k (V)$ be a pair satisfying
\equref{exp=k1} and consider $(k, \triangleleft_{\lambda},
\triangleright_{\Lambda})$ with the discrete bimodule structures
over $V$ implemented by $(\lambda, \Lambda)$ via \equref{bla}.
Then we can see that ${\rm DZ}^2 \, \bigl((k,
\triangleleft_{\lambda}, \triangleright_{\Lambda}), \, V \bigl) \,
\cong \, {\rm Ker} (\Lambda - \lambda)$, and the bijection sends
any $\zeta \in {\rm Ker} (\Lambda - \lambda)$ to the associated
discrete $(\triangleleft_{\lambda}, \, \triangleright_{\Lambda}
)$-cocycle $\theta_{\zeta}$ given by $\theta_{\zeta} (p, q) := pq
\, \zeta$, for all $p$, $q \in P$. Thus, we have proved that there
exists a one to one correspondence between ${\rm Met} \, (k, \,
V)$ and the set of all triples $(\lambda, \, \Lambda, \, \zeta)
\in {\mathcal T} (V) \times V $ such that $\zeta \in {\rm Ker}
(\Lambda - \lambda)$, as needed.

$(2)$ We fix a pair $(\lambda, \Lambda) \in {\mathcal T} (V)$;
using \equref{exp=k1}, we observe first that ${\rm Im} (\Lambda +
\lambda) \leq {\rm Ker} (\Lambda - \lambda)$. Hence, the quotient
vector space from the right hand side of \equref{codim1gen} is
defined. Let $\zeta \in {\rm Ker} (\Lambda - \lambda)$ and $\theta
= \theta_{\zeta}$ be the associated discrete
$(\triangleleft_{\lambda}, \, \triangleright_{\Lambda} )$-cocycle.
Then, we can see that $\theta_{\zeta} \equiv \theta_{\zeta'}$ in
the sense of \equref{discocho} if and only if $\zeta - \zeta' \in
{\rm Im} (\Lambda + \lambda)$. This shows that
$$
{\rm DH}^2 \, \bigl((k, \triangleleft_{\lambda},
\triangleright_{\Lambda}), \, V \bigl) \, \cong \, {\rm Ker}
(\Lambda - \lambda)/ {\rm Im} (\Lambda + \lambda)
$$
and the conclusion follows from \thref{clascoho} and
\thref{carmatab}.
\end{proof}

\begin{example} \exlabel{excodim1}
By taking $V: = k^n$ in \thref{codim1totulb} we obtain the
explicit description of all $(n+1)$-dimensional metabelian
algebras having the derived algebra of dimension $n$. Indeed, we
denote by
$$
{\mathcal T} (n) := \{ \, (X, Y) \in {\rm M}_{n}(k) \, \, | \,\,
X^2 = Y^2 = 0, \, \, \, XY = YX \, \}
$$
For a pair of matrices $(X, Y) \in {\mathcal T} (n)$ we denote by
$ {\mathcal E} (X, Y) := \{ \, u \in k^n \, \, | \,\, X u = Y u
\}$ the equalizer of $X$ and $Y$. Then we have:
$$
{\rm Ext} \, (k_0, \, k^n_0) \, \cong \, \sqcup_{(X, Y) \in
{\mathcal T} (n)} \,\, \bigl( {\mathcal E} (X, Y)/\equiv \bigl)
$$
where $\equiv$ is the following equivalence relation on ${\mathcal
E} (X, Y)$: $u \equiv u'$ if and only if there exists $r\in k^n$
such that $u - u' = (X+Y) r$. For $(X = (x_{ij}), Y = (x_{ij}),
\overline{u} =\overline {(u_1, \cdots, u_n)}) \in {\mathcal T} (n)
\times {\mathcal E} (X, Y)/\equiv$, we denote by $k^{n+1}_{X, Y,
\overline{u}}$ the associated metabelian product $k \star k^n$.
Then $k^{n+1}_{X, Y, \overline{u}}$ is the algebra having $\{F, \,
E_1, \cdots, E_n\}$ as a basis and the multiplication defined for
any $i$, $j = 1, \cdots, n$ by:
$$
F \star F := \sum_{j=1}^n \, u_j \, E_j, \quad F\star E_i :=
\sum_{j=1}^n \, y_{ji} \, E_i, \quad E_i\star F := \sum_{j=1}^n \,
x_{ji} \, E_i
$$
Any $(n+1)$-dimensional metabelian algebra having the derived
algebra of codimension $1$ is isomorphic to such an algebra
$k^{n+1}_{X, Y, \overline{u} }$, for some $(X, Y, \overline{u})
\in {\mathcal T} (n) \times {\mathcal E} (X, Y)/\equiv$.
Classifying these algebras for an arbitrary $n$ is a very
difficult task.
\end{example}

\subsection*{Final comments}
\thref{clascoho} classifies metabelian algebras from the viewpoint
of the extension problem \cite{Hoch2}: for two given vector spaces
$P$ and $V$, ${\rm Ext} (P_0, V_0)$ classifies all metabelian
algebras that are extensions of $P_0$ by $V_0$ up to an
isomorphism of algebras that stabilizes $V_0$ and co-stabilizes
$P_0$. Even if the explicit computation of the classifying object
${\rm Ext} (P_0, V_0)$ offers important information, it is not
enough to classify up to an isomorphism all metabelian algebras of
a given dimension. Based on \thref{carmatab}, the next step is to
ask when two arbitrary metabelian products $P \star V$ and $P'
\star V'$ are isomorphic as algebras. Mutatis-mutandis, this is
the associative algebra version of the isomorphism problem from
metabelian groups, which is a very difficult question that seems
to be connected to Hilbert's Tenth problem, i.e. the problem is
algorithmically undecidable (cf. \cite{baums}).
\thref{codim1totul} gives the full answer in the particular case
$V = V' := k$: the isomorphism of two algebras $P \star k$ and $P'
\star k$ is equivalent to classifying the bilinear forms as stated
there. Unfortunately, a similar result offering a necessary and
sufficient criterion for two metabelian products $k \star V$ and
$k' \star V'$ from \thref{codim1totulb} to be isomorphic could not
be obtained: direct computations lead to a system of
compatibilities that is very technical and impossible to apply in
practice.

We end the paper with an open question. A classical and very
difficult problem in the theory of groups is the following
(\cite[pg. 18]{AFG}): for two given abelian groups $A$ and $B$
describe and classify all groups $G$ which can be written as a
product $G = AB$. Having this question in mind as well as
\coref{itoass} we might ask:

\textbf{Question:} \emph{Let $V_0$ and $P_0$ be two abelian
algebras. Describe and classify all algebras $A$ containing $V_0$
and $P_0$ as subalgebras such that $A = V_0 + P_0$.}

A particular case of the question, corresponding to the additional
condition $P_0 \, \cap \, V_0 = \{0\}$, is the following:
\emph{for two given abelian algebras $V_0$ and $P_0$, describe and
classify all bicrossed products $P_0 \bowtie V_0$ of algebras} -
for details and the construction of the bicrossed product
associated to a matched pair of associative algebras we refer to
\cite{a-2013}.

\end{document}